\numberwithin{equation}{section}
\begin{document}




\renewcommand{\a}{\alpha}
\renewcommand{\b}{\beta}
\renewcommand{\d}{\delta}
\newcommand{\w}{\omega}
\newcommand{\e}{\varepsilon}
\newcommand{\la}{\lambda}
\newcommand{\g}{\gamma}
\newcommand{\tx}{\tilde{x}}
\newcommand{\ba}{\mathbold{a}}
\newcommand{\bu}{\mathbold{u}}
\newcommand{\bb}{\mathbold{b}}
\newcommand{\bx}{\mathbold{x}}
\newcommand{\btx}{\tilde{\mathbold{x}}}
\newcommand{\bM}{\mathbold{M}}
\newcommand{\bK}{\mathbold{K}}
\newcommand{\bA}{\mathbold{A}}
\newcommand{\bB}{\mathbold{B}}
\newcommand{\bI}{\mathbold{I}}
\newcommand{\be}{\mathbold{e}}
\newcommand{\bv}{\mathbold{v}}
\newcommand{\bw}{\mathbold{w}}
\newcommand{\bV}{\mathbold{V}}
\newcommand{\bX}{\mathbold{X}}
\newcommand{\bm}{\mathbold{m}}

\newcommand{\PP}{\mathbb{P}}
\newcommand{\EE}{\mathbb{E}}
\newcommand{\RR}{\mathbb{R}}
\newcommand{\NN}{\mathbb{N}}
\newcommand{\CC}{\mathbb{C}}

\newcommand{\cN}{\mathcal{N}}
\newcommand{\cM}{\mathcal{M}}
\newcommand{\cF}{\mathcal{F}}
\newcommand{\cL}{\mathcal{L}}
\newcommand{\cE}{\mathcal{E}}

\newcommand{\cP}{\mathscr{P}}
\newcommand{\cR}{\mathscr{R}}

\newcommand{\tr}{\operatorname{tr}}
\newcommand{\Var}{\operatorname{Var}}
\newcommand{\grad}{\operatorname{lin}}
\newcommand{\hess}{\operatorname{quad}}

\newcommand{\Cr}[1]{\textbf{\textcolor{red}{{#1}}}}
\newcommand{\Cb}[1]{\textbf{\textcolor{blue}{{#1}}}}

\newcommand{\dif}{\mathrm{d}}

\section{Introduction}
Stochastic optimization methods are the modern day standard for many large-scale computational tasks,
especially those that arise in machine learning.
There is a long history of analyses of these algorithms, beginning with the seminal work of \cite{robbins1951},
which focused on long-time behavior in a fixed dimensional space.
However, modern applications of stochastic optimization have motivated a different regime of analysis,
where the problem dimensionality grows proportionally with the run-time of the algorithm.

In this article, we derive the exact scaling behavior of stochastic gradient descent (SGD) on a least squares problem, in the \emph{one-pass} setting (see below) when dimension tends to infinity.  We further draw a comparison to the recent work \cite{PPAP01,Paquettes02}, in which the \emph{multi-pass} version of this problem was considered.

\paragraph{Stochastic gradient descent for empirical risk minimization}
Most versions of (minibatch) SGD can be formulated in the context of \emph{finite-sum problems}: 
\begin{equation} \label{eq:finite_sum}
    \min_{\bx \in \mathbb{R}^d}~ \bigg \{ f(\bx) \coloneqq \frac{1}{n} \sum_{i=1}^n f_i(\bx) \bigg\}.
\end{equation}
Empirical risk minimization fits in this context by supposing that 
there are $n$ independent samples from some data distribution, 
and each $f_i$ represents the loss of how the parameters $\bx$ in some model fit the $i$-th datapoint.
In this article we will exclusively consider the case of linear regression with $\ell^2$--regularizer.
So we suppose that there are $n$ iid samples $\left( (\ba_i, \bb_i) : 1 \leq i \leq n \right)$
from some distrbution $\mathcal{D}$,
with some assumptions to be specified.  We arrange this data into a design matrix $\bA$ and label vector $\bb$,
whose $i$-th row is given by $\ba_i$.
Finally, we specify the functions $f_i$ in \eqref{eq:finite_sum} by setting
\[
  f_i(\bx) = \tfrac12( \ba_i \cdot \bx - \bb_i)^2 + \tfrac{\delta}{2} \|\bx\|^2.  
\]
The parameter $\delta \geq 0$ is fixed and is the strength of the regularizer and throughout $\|\cdot\|$ will be the Euclidean norm.

Minibatch stochastic gradient descent in this context can be described as
\begin{equation} \begin{aligned} \label{eq:sgd}
    \bx_{k+1} &= \bx_{k} - \gamma_k \nabla f_{i_k}(\bx_k) \\
    &= \bx_k - \gamma_k \bA^T \be_{i_k} \be_{i_k}^T ( \bA \bx_k - \bb) 
    - \gamma_k \delta 
    \bx_k\,
\end{aligned} \end{equation}
where $\{\gamma_k\}$ are stepsize parameters, 
$\be_i$ is the $i$-th standard basis vector,  
and $\{i_k\}$ is a sequence of choices data.

In this article we consider the \emph{one-pass} case, in which $i_k = k$ but the algorithm is terminated after $n$ steps.
In practice, the order of the data points might be shuffled once before, but in the setting we have posed, with iid data, there is no point to including this additional randomization.
There are other choices for how to pick $i_k,$ and we highlight three of them, all of which are \emph{multi-pass} variants. 

In \emph{random (with replacement) sample} SGD, each $i_k$ is chosen uniformly at random from $\{1,2,\dots,n\}$.
This is the setting considered in \citep{PPAP01}, and we shall refer simply to this flavor of SGD simply as \emph{multi-pass} SGD in the bulk of the paper.
But for context, we also mention \emph{single shuffle} SGD, in which one takes $i_k = k \text{ mod } n$, and so only differs from one-pass SGD
in that the algorithm performs the same operations every \emph{epoch}\footnote{We take epoch to mean $n$ steps of the algorithm in all these contexts (including the with-replacement case).}
In \emph{random shuffle} SGD, one modifies the above strategy by randomly permuting the data between each epoch.
All of these strategies are extensively studied in the optimization literature: it is generally thought that the \emph{single shuffle} and \emph{random shuffle} strategies are faster than the random sample strategy \citep{YunSraJadbabaie} 
(see also 
\citep{RechtRe,
  GurbuzbalabanShuffling,
  SafranShamir,
  AhnYunSra}).

The one-pass case is the fundamental point of comparison for all of these methods,
being both simpler phenomenologically and also representing an idealization of SGD, 
in which the run-time of the algorithm is the amount data.
Appropriately, running for longer (meaning increasing $n$)
can only improve the statistical performance of the SGD estimator $\bx_n$,
in which context this is usually referred to as \emph{streaming SGD}.

The performance of SGD is measured through the \emph{population risk} $\mathscr{P}$
and sometimes through an $\ell_2$--regularized risk $\mathscr{R}$, which are given by
  \begin{align}
    &\mathscr{P}(\bx) \coloneqq  \tfrac12\EE_{(\ba,\bb)}( \ba \cdot \bx - \bb)^2,
    \quad
    &(\ba,\bb) \sim \mathcal{D},
    \label{eq:prisk} \\
    &\mathscr{R}(\bx) \coloneqq \tfrac12 \EE_{(\ba,\bb)} ( \ba \cdot \bx - \bb)^2 + \tfrac \delta 2 \|\bx\|^2,
    \quad
    &(\ba,\bb) \sim \mathcal{D}.
    \label{eq:rrisk}
  \end{align}
This regularized risk appears naturally as the mean behavior of one-pass SGD, in that
\[
  \bx_{k+1} = \bx_{k} - \gamma_k (\nabla \mathscr{R}(\bx_k) + \xi_{k+1}),
\]
for martingale increments $(\xi_k : 1 \leq k \leq n)$.  
Another natural statistical setting to consider is out-of-distributional regression, 
in which case we would measure the performance of SGD trained on $\mathcal{D}$ but tested, as in \eqref{eq:prisk} after replacing $\mathcal{D}$ with another distribution $\mathcal{D}'$.  
We shall not pursue this case in detail, 
but we note that all of the above examples are some quadratic functionals of the SGD state $\bx$.

\paragraph{Data and stepsize assumptions} 
The goal of this analysis is to allow the number of samples $n$ to be large
and proportional to the dimension of the problem, here $d$.
This means that the data must be normalized to be nearly dimension independent.
Further, we shall need good tail properties of some of the random variables involved,
and so we recall the Orlicz norms $\|\cdot\|_{\psi_p}$ for $p \geq 1$ which are given by
\[
  \|X\|_{\psi_p} = \inf\{ t : \EE e^{|X|^p/t^p} \leq 2\}.
\]
We refer the reader to \citep{Vershynin} for further exposition, properties and equivalent formulations.

We shall suppose throughout that under $\mathcal{D}$, the labels are given by an underlying linear model with noise.
Formally, we suppose that:
\begin{assumption}\label{a:lineargroundtruth}
  For $(\ba,\bb)$ sampled from $\mathcal{D}$, conditionally on $\ba$, the distribution of $\bb$ is given
  by $\ba \cdot \tilde{\bx} + w$ where $w$ is mean $0$, variance $\eta^2 \geq 0$ and is subgaussian with $\|w\|_{\psi_2} \leq d^{\e}.$
  The ground truth $\tilde{\bx}$ is assumed to have norm at most $d^{\e}$.
\end{assumption}
\noindent The constant $\e$ will be small and fixed throughout. Anything less than $\tfrac{1}{18}$ will do.

The data covariance is assumed to be normalized in such a way that it is almost bounded in norm, which is to say:
\begin{assumption}\label{a:covariance}
  The covariance matrix $\bK \coloneqq \EE[ \ba \ba^T ]$ has operator norm bounded independent of $d.$
\end{assumption}
\noindent Note that while we do not explicitly assume that $\ba$ is centered, the mean would have to be small in some sense to achieve the assumption above. 

Finally, we suppose that $\ba$ has good tail properties, namely that
\begin{assumption}\label{a:tails}
  The data vector $\ba$ satisfies that, for any deterministic $\bx$ of norm less than $1$, $\|\ba \cdot \bx\|_{\psi_2} \leq d^{\e}$, 
  and the data vector $\ba$ satisfies the Hanson-Wright inequality: for all $t \geq 0$ and for any deterministic matrix $\bB$
  \[
    \PP\left(\left|\ba^T\bB\ba-\EE\ba^T\bB\ba\right|\geq t\right)
    \leq
    2\exp\left(-\min\left\{\frac{t^2}{d^{4\e}||\bB||_{HS}^2},\;\frac{t}{d^{2\e}||\bB||}\right\}\right).
  \]
\end{assumption}

We remark that these assumptions hold for two important settings:
\begin{enumerate}[(a)]
\item when $\ba=\sqrt{\bK}\bu$ where $\bK$ is some deterministic matrix of bounded operator norm and $\bu$ is a vector of iid subgaussian random variables or
 \item when $\ba$ is a vector with the \emph{convex concentration property}, see \citep{adamczak} for details. 
\end{enumerate}
There are natural examples of the second case, such as random features models \citep{Rahimi2008Random} with Lipschitz activation functions (see also \citep[Proposition 6.2]{PPAP01} for specifics in the case of random features).
We note that by truncation, it is also possible to work in the setting (a) above but solely under a uniform bound on a sufficiently high but finite moment, but we do not pursue this.

Finally, the step-size parameters $\gamma_k$ must be normalized approporiately:
\begin{assumption}\label{a:steps}
  The stepsize $\gamma_k = \tfrac{\gamma}{d}$ for all $k$ and fixed $\gamma > 0$.
\end{assumption}
\noindent We note that we may also pick $\gamma_k = \gamma(k/d)/d$ for a bounded continuous function $\gamma: [0,\infty) \to [0,\infty)$, and this leads to no change anywhere in the arguments.

In light of all these assumptions, we note that SGD finally reduces to the following 
stochastic recurrence
\begin{equation}\label{eq:renormalized}
  \bx_k-\btx=
  (\bI(1-\tfrac{\gamma \d}{d})-\g\bm_k\bm_k^T)(\bx_{k-1}-\btx)
  -\tfrac{\gamma \d}{d} \btx
  +\g\bm_k\eta_k,
\end{equation}
where $\eta_k = w_k/\sqrt{d}$ and $\bm_k = \ba_k/\sqrt{d}$.

\paragraph{Homogenized SGD} 
Our theorem is most easily formulated as showing that the state of SGD can be compared to a certain diffusion model in high dimensions.
Homogenized SGD is defined to be a continuous time process with initial condition $\bX_0=\bx_0$ that solves the stochastic differential equation
\begin{equation}
\dif \bX_t=-\g\nabla\mathscr{R}(\bX_t) \dif t+\g\sqrt{\tfrac2d\mathscr{P}(\bX_t) \bK} \dif B_t
\end{equation}
where $B_t$ is standard Brownian motion in dimension $d$,
and $\mathscr{R}$ and $\mathscr{P}$ are the regularized and unregularized risks, respectively
(recall \eqref{eq:prisk} and \eqref{eq:rrisk}).

Our main theorem shows that for quadratic statistics (in particular the risks \eqref{eq:prisk} and \eqref{eq:rrisk}), homogenized SGD and SGD are interchangeable to leading order.
We use the probabilistic modifier \emph{with overwhelming probability} to mean a statement holds except  on an event of probability at most $e^{-\omega(\log d)}$ where $\omega(\log d)$ tends to $\infty$ faster than $\log d$ as $d\to\infty.$
We further introduce a norm $\|\cdot\|_{C^2}$ on quadratic functions $q : \RR^d \to \CC$
\[
\|q\|_{C^2} \coloneqq 
\|\nabla^2 q\|
+
\|\nabla q(0)\|
+
|q(0)|,
\]
with the norms on the right hand side being given by the operator and Euclidean norm respectively.
\begin{theorem}\label{thm:main}
For any quadratic $q :\RR^d \to \RR$,
and for any deterministic initialization $\bx_0$ with $\|\bx_0\| \leq 1$,
there is a constant $C\left( \|\bK\| \right)$ 
so that
the processes $\{\bx_k\}_{k=0}^n$ and $\{\bX_t\}_{t=0}^{n/d}$ 
satisfy
for any $n$ satisfying $ n \leq d \log d/C(\|\bK\|)$
\begin{equation}
\sup_{0\leq k\leq n}
\Big|q(\bx_k)-q(\bX_{k/d})\Big|<\|q\|_{C^2}\cdot 
e^{C\left( \|\bK\| \right)\frac{n}{d}}
\cdot
d^{-\frac12+9\e}
\end{equation}
with overwhelming probability.
\end{theorem}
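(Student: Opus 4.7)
The plan is to compare $q(\bx_k)$ and $q(\bX_{k/d})$ through Doob/Itô decompositions and to close a discrete Grönwall inequality on the difference $\phi_k:=q(\bx_k)-q(\bX_{k/d})$. Because $q$ is quadratic, the Taylor expansion $q(\bx_k)=q(\bx_{k-1})+\nabla q(\bx_{k-1})^T\Delta_k+\tfrac12\Delta_k^T(\nabla^2 q)\Delta_k$ is exact (with $\Delta_k=\bx_k-\bx_{k-1}$ read off from \eqref{eq:renormalized}), and Itô's formula applied to HSGD is likewise an exact stochastic identity, so we do not pay any truncation error in the expansion itself.

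First I compute the one-step conditional drift of SGD. Using $\EE\bm_k\bm_k^T=\bK/d$, the Hanson--Wright concentration of $\bm_k^T(\nabla^2 q)\bm_k$ around $\tr(\bK\nabla^2 q)/d$ from Assumption~\ref{a:tails}, and the noise assumption in Assumption~\ref{a:lineargroundtruth}, one finds
\[
\EE[q(\bx_k)-q(\bx_{k-1})\mid\cF_{k-1}]
= \tfrac{1}{d}\mathcal{G}q(\bx_{k-1}) + E^{\mathrm{drift}}_k,
\quad
\mathcal{G}q(\bx):=-\gamma\nabla q(\bx)\cdot\nabla\mathscr{R}(\bx)+\tfrac{\gamma^2}{d}\mathscr{P}(\bx)\tr(\bK\nabla^2 q).
\]
The key structural point is that $\mathcal{G}q$ is \emph{again a quadratic} in $\bx$ with $\|\mathcal{G}q\|_{C^2}\le C(\|\bK\|)\|q\|_{C^2}$ uniformly in $d$ (the $1/d$ factor compensates $\tr(\bK\nabla^2 q)=O(d)$), and that $\mathcal{G}q$ is exactly the HSGD generator applied to $q$. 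Integrating the HSGD Itô identity over $[(k-1)/d,k/d]$ and using the short-time regularity of $\bX_s$ on this interval produces the same leading-order increment $\tfrac{1}{d}\mathcal{G}q(\bX_{(k-1)/d})$ plus a time-discretization error.

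Subtracting the two expansions yields the recursion
\[
\phi_k=\phi_{k-1}+\tfrac{1}{d}\bigl[\mathcal{G}q(\bx_{k-1})-\mathcal{G}q(\bX_{(k-1)/d})\bigr]+\Delta N_k,
\]
where $\Delta N_k$ bundles the SGD martingale increment, the HSGD Itô martingale increment, and all higher-order concentration and time-discretization errors. Because $\mathcal{G}$ maps the cone of quadratics of bounded $C^2$-norm into itself with constant $C(\|\bK\|)$, I run the same argument simultaneously for all such observables and pass to the supremum $\Psi_k:=\sup_{\|\tilde q\|_{C^2}\le 1}|\phi^{\tilde q}_k|$, which satisfies $\Psi_k\le (1+C(\|\bK\|)/d)\Psi_{k-1}+\sup_{\tilde q}|\Delta N^{\tilde q}_k|$. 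Discrete Grönwall then produces the prefactor $e^{C(\|\bK\|)n/d}$ that appears in the theorem.

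The main obstacle is to show that $\sum_{j\le n}\Delta N^{\tilde q}_j = O(\|\tilde q\|_{C^2} d^{-1/2+9\e})$ uniformly in $\tilde q$ with overwhelming probability. The SGD martingale sum consists of terms of the form $\bm_j^T\bB\bm_j-\tr(\bK\bB)/d$ and $\bm_j\cdot\bv$, which are controlled by combining a Burkholder-type bound with the Hanson--Wright and subgaussian tails of Assumption~\ref{a:tails}; each tail-bound application costs a factor $d^{\e}$, and tallying these contributions gives the $9\e$ loss in the exponent. The HSGD martingale is handled by BDG together with Gaussian concentration. Critically, every one of these tail bounds assumes uniform control of the iterate norms $\|\bx_k\|,\|\bX_{k/d}\|$, but these norms are themselves particular quadratic observables and so are controlled only by the very Grönwall estimate we are trying to prove. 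I resolve this circularity with a bootstrap/stopping-time argument: the comparison is proved up to the first time $\tau$ that any tracked observable exits a polynomially large window, and one then shows $\tau\ge n$ with overwhelming probability. Managing this coupled bootstrap on norm bounds and $C^2$ observables is where the bulk of the technical work lies.
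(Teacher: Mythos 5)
Your skeleton (Doob/It\^o decomposition, identification of the SGD drift with the HSGD generator, Gr\"onwall, stopping-time bootstrap for the iterate norms) is the same as the paper's, but there is a genuine gap at the single most important step: how you close the hierarchy of observables. You note correctly that the drift of $q$ involves \emph{new} quadratics (essentially $\bx^T(\delta\bI+\bK)^a\nabla^2q\,(\delta\bI+\bK)^b\bx$ and friends), and you propose to handle this by running the comparison \emph{simultaneously over the whole unit ball} $\{\tilde q:\|\tilde q\|_{C^2}\le1\}$ and taking $\Psi_k=\sup_{\tilde q}|\phi_k^{\tilde q}|$. This cannot work with the probabilistic tools you invoke. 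The martingale/concentration bounds (Hanson--Wright, Bernstein, BDG) hold for each \emph{fixed} $\tilde q$ with failure probability only $e^{-\omega(\log d)}$ (concretely, of order $\exp(-cd^{2\e}/\log d)$ for the Bernstein step), whereas the unit ball of quadratics is a $\Theta(d^2)$-dimensional set whose $d^{-2}$-nets have cardinality $e^{\Theta(d^2\log d)}$; no union bound or chaining argument survives that mismatch, so the claim ``$\sum_{j\le n}\Delta N_j^{\tilde q}=O(\|\tilde q\|_{C^2}d^{-1/2+9\e})$ uniformly in $\tilde q$'' is unobtainable as stated. This is exactly the obstruction the paper is built around: instead of all quadratics, it introduces the family $Q_n(q)$ of \eqref{eq:Qn}, parametrized by two contour variables $(y,z)\in\Gamma\times\Gamma$, shows via the resolvent identity and Cauchy's integral formula that this family \emph{approximately closes} under the drift map (Lemma~\ref{lem:sgd_hsgd_comparison}), and then discretizes $\Gamma\times\Gamma$ into a net of only polynomial cardinality (Lemma~\ref{lem:quad_net}) so that the overwhelming-probability bounds do survive the union bound. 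Some low-dimensional reduction of this kind (or a carefully truncated hierarchy $\{\mathcal{G}^jq\}$ with explicit bookkeeping of the geometrically growing $C^2$-norms) is indispensable, and your proposal does not supply it.

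A secondary, fixable flaw: your recursion $\Psi_k\le(1+C/d)\Psi_{k-1}+\sup_{\tilde q}|\Delta N_k^{\tilde q}|$ feeds the \emph{absolute values} of the martingale increments into Gr\"onwall. Each increment has size of order $d^{-1+O(\e)}$, so $\sum_{k\le n}|\Delta N_k|$ is of order $d^{O(\e)}\log d$, not $d^{-1/2+9\e}$; the cancellation is lost. You must instead unroll the recursion and control the running maximum $\sup_{k\le n}\bigl|\sum_{j\le k}\Delta N_j\bigr|$ of the martingale itself (as in Lemma~\ref{lem:mart_bounds}) before applying Gr\"onwall to the predictable part alone.
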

\noindent The processes $\bx_k$ and $\bX_t$ are independent, and hence this is also a statement about concentration.  In particular, the statement is also true if we replace $q(\bX_{k/d})$ by $\EE q(\bX_{k/d})$. 

\paragraph{Explicit risk curves}
Using existing theory (see \cite[Theorem 1.1]{PPAP01}),
$\mathscr{P}(\bX_{k/d})$ 
and 
$\mathscr{R}(\bX_{k/d})$ 
can be seen to concentrate around their means, 
which solve a convolution Volterra equation.  Specifically,
$\EE\mathscr{P}(\bX_t)= \Psi(t)$ and 
$\EE\mathscr{R}(\bX_t)= \Omega(t)$
%
%
where
\[
  \begin{pmatrix}
\Psi(t) \\
\Omega(t) 
\end{pmatrix}
  = 
  \begin{pmatrix}
  \mathscr{P}( \mathscr{X}_{\gamma t})\\ 
  \mathscr{R}( \mathscr{X}_{\gamma t})\\ 
  \end{pmatrix}
  + \int_0^t 
\begin{pmatrix}
  K(t-s ; \nabla^2 \mathscr{R})\Psi(s) \\
K(t-s ; \nabla^2 \mathscr{P})\Psi(s) 
\end{pmatrix} \dif s,
  \quad
  \begin{cases}
    K(t ; \bM) \coloneqq \tfrac{\gamma^2}{d}
    \tr( \bK \bM e^{-2\gamma t(\bK + \delta \bI)}), \\
   \dif\mathscr{X}_{\gamma t} \coloneqq - \gamma \nabla \mathscr{R}(\mathscr{X}_{\gamma t}),
   \quad \mathscr{X}_{0} = \bX_0.
 \end{cases}
\]
The process $\mathscr{X}_t$ is gradient flow, and for the problems here, is explicitly solvable.
Note the equation for $\Psi$ is autonomous, and the solution of $\Omega$ is then solvable in terms of it.
From this Volterra equation, it is easy to derive convergence rates, as well as convergence thresholds, as well as expressions for the limiting risk (in the double scaling limit, $d\to\infty$ followed by $t \to \infty$).  See the discussions in \citep{Paquettes02} and see Figure \ref{fig:risks} for an example.

\paragraph{Comparison to the multi-pass case}

In \cite{PPAP01}, the analog of Theorem \ref{thm:main} was proven for the (random sample) multi-pass case.  In that case, the diffusion is different.  Introduce the \emph{empirical risk} $\mathscr{L}(\bx) = \tfrac{1}{2n}\| \bA \bx - \bb\|^2$ and the regularized empirical risk $f(\bx) = \mathscr{L}(\bx) + \delta \| \bx \|^2$.  Then the homogenized SGD for the multi-pass case becomes
\begin{equation}
\dif \bX_t=-\g\nabla f(\bX_t) \dif t+\g\sqrt{\tfrac2d\mathscr{L}(\bX_t) (\tfrac1n \bA^T \bA) } \dif B_t
\end{equation}
Hence the difference between the multi-pass and one-pass cases is that the population risks are traded for the empirical risk and the data covariance matrix $\bK$ is traded for the empirical data covariance matrix $(\tfrac1n \bA^T \bA)$.  Note that if one conditions on the data $(\bA,\bb)$, then multi-pass SGD in fact \emph{is} streaming SGD, but with a finitely supported data distribution (specifically the empirical distribution of data); however the empirical distribution of data is very far from satisfying Assumption \ref{a:tails}.

\begin{figure}[h]
  \begin{center}
    \includegraphics[width=10cm]{./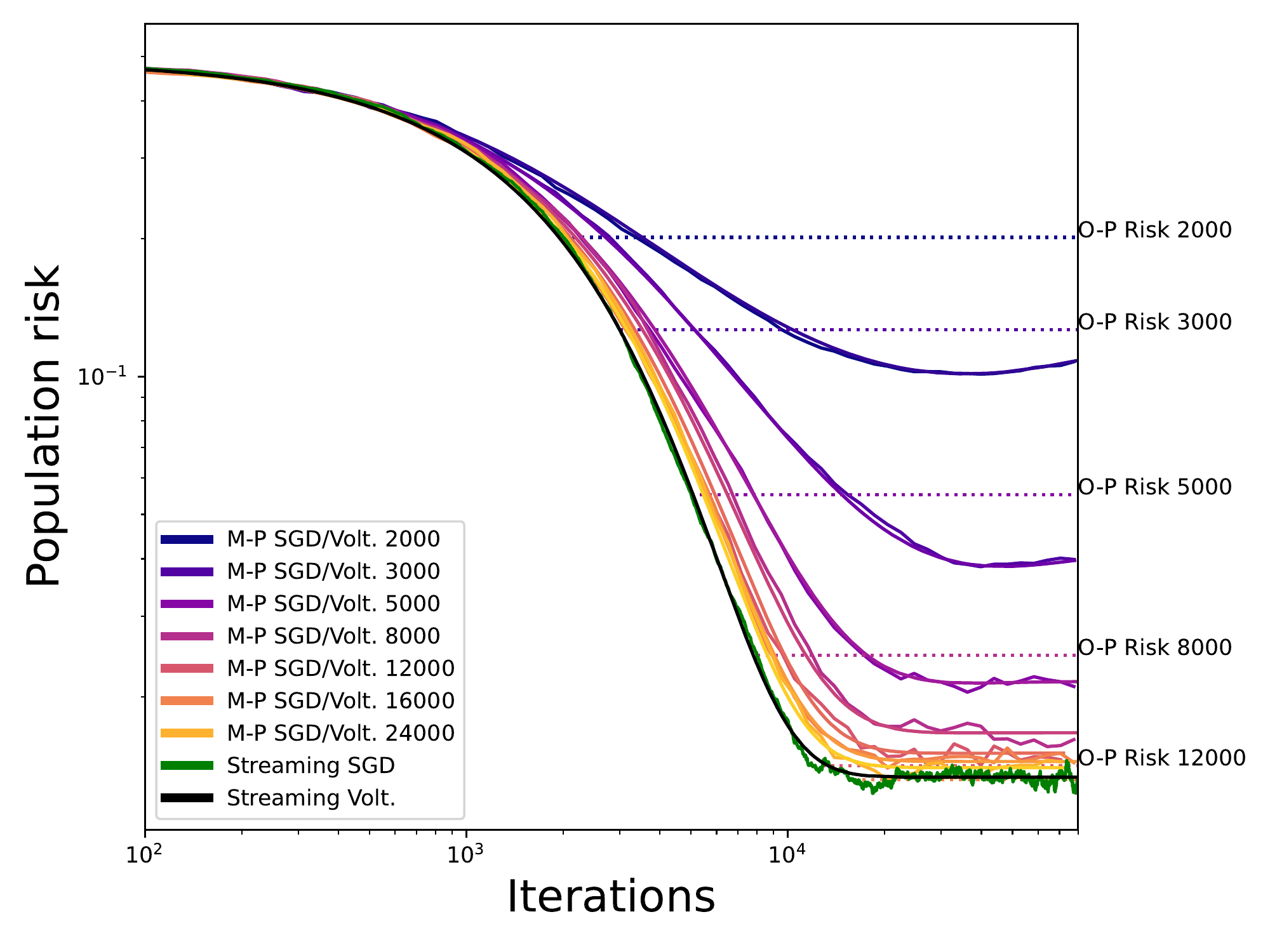}
  \end{center}
  \caption{Risk curves for a simple linear regression problem in $d=2000$. Multi-pass SGD, its high dimensional equivalent (the expected risk under homogenized SGD, i.e. ``Volterra''), Streaming SGD (i.e. one-pass with varying dataset size), and the expected risk of homogenized SGD (``Streaming Volterra'') are all plotted.  Risk levels for streaming SGD at various levels $n$ are plotted for comparison against the corresponding multi-pass version.  Note that at smaller dataset sizes, multi-pass SGD improves greatly over one-pass SGD.  At higher dataset sizes, they are similar and in fact multi-pass SGD always underperforms. }
  \label{fig:risks}
\end{figure}


\paragraph{Discussion}

We have presented an approach to taking the high-dimensional limit of one-pass SGD on a least squares problem in which the number of steps is proportional to the dimension of the problem.  The limit object is described in terms of a Langevin type diffusion, which can be directly compared to the same object in the multi-pass case.

The literature on scaling limits of one-pass SGD training is large, and so we mention just some of the closest literature.  \citep{gerard2} is perhaps the closest high-dimensional diffusion approximation, and it applies in cases where there is a hidden finite dimensional structure; it covers the case studied here when $\bK=\bI$ as well as cases in which $\bK$ has boundedly many eigenvalues.  See also \citep{gerard1}.

There are other scaling limits that pursue a different formulation than the one here.
\citep{Wang_2019,Wang_2019a} give a PDE limit for the state for a generalized linear model, with identity covariance.
\citep{BGH} give a scaling limit of the SGD under smoothness assumptions on the covariance $\bK,$ when interpreted geometrically; they further describe fluctuations of SGD in a certain sense.  Note that Theorem \ref{thm:main} essentially gives the law of large numbers for the risks and not the fluctuations.  

\citep{Gerbelot} uses dynamical field theory to give closely related results for shallow neural networks with minibatch SGD of large batch-size; dynamical mean field theory provides an implicit characterization of the autocorrelation of the minibatch noise and a few other processes.  See also the related work of \citep{Celentano}.  We comment that in the case of proportional batch sizes, there is also a discrete Volterra description in \cite{LeeChengPaquettePaquette}.

\paragraph{Organization} 
In Section \ref{sec:proof} we give an overview of the main proof, reducing it to its main technicalities. In Section \ref{sec:bounding}, we bound the stochastic error terms, representing the main technical contribution of the paper.

\section{Main argument of proof} \label{sec:proof}
In order to compare the SGD and homogenized SGD, 
we use a version of the martingale method in diffusion approximations (see \cite{Kurtz}).
In effect we show that $q(\bx_k)$ nearly satisfies the conclusion of It\^o's lemma.
Further, we show the martingale terms in both of the Doob decompositions 
are small, and hence it suffices to show the predictable parts 
of $q(\bx_k)$ and $q(\bX_t)$ are close. 

To advance the discussion, we compute this Doob decomposition.
To take advantage of the simpler structure afforded by removing $\btx$, introduce
\begin{equation}
\bv_k\coloneqq\bx_k-\btx
\quad
\text{and}
\quad
\bV_t\coloneqq \bX_t - \btx.
\end{equation}
We shall extend the first integer indexed function to real-valued indices by setting $\bv_t = \bv_{\lfloor t \rfloor}$.
We also let $(\cF_t : t \geq 0)$ be the filtration generated by $(\bv_t : t \geq 0)$ and $(\bV_{t/d} : t \geq 0)$.  Hence for all $k \in \NN$, $\bv_k$ is measurable with respect to $\cF_k$.
%
%
%
Recalling the recurrence \eqref{eq:renormalized}
for a quadratic $q$
\begin{equation}\label{eq:q_increment}\begin{split}
q(\bv_k)-q(\bv_{k-1})
&=-
\g(\nabla q(\bv_{k-1}))^T
(\bu_{k-1}+\Delta_k)
+\tfrac{\g^2}{2}
(\bu_{k-1}+\Delta_k)^T
(\nabla^2q)
(\bu_{k-1}+\Delta_k),
\quad
\text{where}
\quad 
\\
\Delta_k
&=
\bm_k(\bm_k^T\bv_{k-1} -\eta_k)
\quad\text{and}\quad
\bu_{k-1}
=\tfrac{\d}{d}(\bv_{k-1}+\btx).
\end{split}
\end{equation}
The equation above can each be decomposed as a predictable part and two martingale increments 
\begin{equation}
\label{eq:q_increment_decomp}\begin{split}
q(\bv_k)-q(\bv_{k-1})=
&-\g (\nabla q(\bv_{k-1}))^T
\bigl( (\tfrac\d d\bI + \tfrac1d \bK )\bv_{k-1} + \tfrac{\d \btx}{d}\bigr)
+\Delta\cM_{k}^{\grad}\\
&+\tfrac{\g^2}{2}\tr(\tfrac1d \bK (\nabla^2q))
\left(\tfrac1d\bv_{k-1}^T\bK \bv_{k-1}+\EE[\eta_k^2]\right)+\Delta\cE_k^{\hess}+\Delta\cM_{k}^{\hess},  \\
\text{where}
\quad\Delta\cM_{k}^{\hess}
\coloneqq
&\Delta_k^T \nabla^2 q \Delta_k- \EE[ \Delta_k^T \nabla^2 q \Delta_k \mid \cF_{k-1}].
\end{split}
\end{equation}
The remainder of the martingale increments are given by $\Delta\cM_{k}^{\grad}$ and are all linear in $\Delta_k$.
The predictable parts have been further decomposed into the leading order terms and an error term $\Delta\cE_k^{\hess}$.  

These predictable parts, in turn, depend on different statistics $q_1(\bv_{k-1})$.
In finite dimensional settings, we would be able to relate this (or some suitably large finite set of summary statistics $q,q_1, \dots, q_r$ to itself through a closed system of recurrences.  In this setting, this is not possible.  On the other hand, for the problem at hand, we show there is a manifold of functions which approximately closes.  Specifically, we let
\begin{equation}\label{eq:Qn}\begin{split}
  Q_n(q)
  \coloneqq
  Q_n(q,\bK)
  =&\Big\{
    q(\bx),
    \quad (\nabla q(\bx))^TR(z;\bK)\bx,
    \quad\bx^TR(y;\bK)(\nabla^2q)R(z;\bK)\bx, \\
    &
    \qquad (\nabla q(\bx))^TR(z;\bK)\btx,
    \quad\bx^TR(y;\bK)(\nabla^2q)R(z;\bK)\btx, \quad\forall\, z,y \in \Gamma
  \Big\}.
\end{split}
\end{equation}
Here $R(z;\bK) = (\bK - z\bI)^{-1}$ is the resolvent matrix, and $\Gamma$ is a circle of radius $\max\{1,3\|\bK\|\}$.
In order to control the martingales, it is convenient to impose a stopping time
\begin{equation}
\tau \coloneqq \inf\left\{k:||\bv_k||>d^\e\}\cup\{td:||\bV_t||>d^\e\right\},
\end{equation}
and we introduce the corresponding stopped processes
\begin{equation}
  \label{eq:stopped}
\bv_{k}^\tau=\bv_{k\wedge \tau},\quad \bV_t^\tau=\bV_{t\wedge(\tau/d)}.
\end{equation}
We prove a version of our theorem for the stopped processes and then show that the stopping time is greater than $n$ with overwhelming probability.

Our key tool for comparing $\bv_{td}$ and $\bV_t$ is the following lemma.

\begin{lemma}\label{lem:sgd_hsgd_comparison}
  Given a quadratic $q$ with $\|q\|_{C^2} \leq 1$, with $Q=Q_n(q) \cup Q_n(\cP) \cup Q_n(\|\cdot\|^2)$ 
  as above,
\begin{equation}\begin{split}
  \max_{0 \leq t \leq \tfrac{n}{d}}
  |q(\bv_{td}^\tau)-q(\bV_t^\tau)|
  \leq 
  &\sup_{0\leq t\leq {n}/{d}}
  \left(|\cM_{\lfloor td \rfloor}^{\grad,\tau}|+
  |\cM_{\lfloor td \rfloor}^{\hess,\tau}|
  +|\cE_{\lfloor td \rfloor}^{\hess,\tau}|
  +|\cM_t^{HSGD,\tau}|\right)\\
  &+C(\|\bK\|)
  \cdot
  \sup_{g\in Q}\int_0^{n/d}|g(\bv_{sd}^\tau)-g(\bV_s^\tau)|ds.
\end{split}\end{equation}
Here $\cM_t^{HSGD,\tau}$ is the martingale part in the semimartingale decomposition of $q(\bV_t^\tau)$.
\end{lemma}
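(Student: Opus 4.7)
The plan is to compute the Doob semimartingale decompositions of $q(\bv_{td}^\tau)$ and $q(\bV_t^\tau)$, subtract them, and control the predictable residual via the resolvent family $Q$. Summing the SGD one-step decomposition \eqref{eq:q_increment_decomp} over $k=1,\ldots,\lfloor td\rfloor\wedge\tau$, and using that $\bv$ is piecewise constant, so the $1/d$ factor in every stepsize converts the discrete sum into an exact Riemann integral, gives
\[
q(\bv_{td}^\tau)-q(\bv_0)=\cM_{\lfloor td\rfloor}^{\grad,\tau}+\cM_{\lfloor td\rfloor}^{\hess,\tau}+\cE_{\lfloor td\rfloor}^{\hess,\tau}+\int_0^{t\wedge(\tau/d)}F(\bv_{sd}^\tau)\,\dif s,
\]
where
\[
F(\bv):=-\g\,(\nabla q(\bv))^T\nabla\cR(\bv+\btx)+\tfrac{\g^2}{d}\,\cP(\bv+\btx)\,\tr(\bK\nabla^2 q),
\]
using $\nabla\cR(\bv+\btx)=\bK\bv+\d(\bv+\btx)$ and $\cP(\bv+\btx)=\tfrac12\bv^T\bK\bv+\tfrac{\eta^2}{2}$. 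It\^{o}'s formula applied to the stopped HSGD diffusion produces exactly the same drift $F(\bV_s^\tau)$ integrated against $\dif s$, plus the martingale $\cM_t^{HSGD,\tau}$. Subtraction therefore leaves only the four stated martingale/error terms plus the residual $\int_0^{t\wedge(\tau/d)}(F(\bv_{sd}^\tau)-F(\bV_s^\tau))\,\dif s$.

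The core step is to bound this residual by $C(\|\bK\|)\sup_{g\in Q}\int_0^{n/d}|g(\bv_{sd}^\tau)-g(\bV_s^\tau)|\,\dif s$. Since $\Gamma$ is a circle of radius $\max\{1,3\|\bK\|\}$ enclosing the spectrum of $\bK$, the holomorphic functional calculus yields
\[
\bK=-\frac{1}{2\pi i}\oint_\Gamma z\,R(z;\bK)\,\dif z\quad\text{and}\quad\bI=-\frac{1}{2\pi i}\oint_\Gamma R(z;\bK)\,\dif z.
\]
Substituting into the $\nabla\cR$ term of $F$ expresses each of $(\nabla q(\bv))^T\bK\bv$, $(\nabla q(\bv))^T\bv$, and $(\nabla q(\bv))^T\btx$ as a contour integral over $\Gamma$ of one of the functionals $(\nabla q(\bv))^T R(z;\bK)\bv$ or $(\nabla q(\bv))^T R(z;\bK)\btx$, both of which lie in $Q_n(q)$. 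The It\^{o} correction is a scalar multiple of $\cP(\bv+\btx)$, so its contribution to $F(\bv)-F(\bV)$ is bounded pointwise by $\tfrac{\g^2}{d}|\tr(\bK\nabla^2 q)|\cdot|\cP(\bv)-\cP(\bV)|\le \g^2\|\bK\|\,|\cP(\bv)-\cP(\bV)|$, and $\cP$ is itself the first entry of $Q_n(\cP)$. The double-resolvent form $\bx^T R(y;\bK)(\nabla^2 q)R(z;\bK)\bx$ and the $Q_n(\|\cdot\|^2)$ family are closed under the same contour operations and are included in $Q$ so that the lemma can later be iterated through $Q$-functionals themselves. Applying the triangle inequality inside each contour integral, the total length $|\Gamma|=2\pi\max\{1,3\|\bK\|\}$ and the bound $\max_{z\in\Gamma}|z|\le 3\|\bK\|\vee 1$ collect into a single constant $C(\|\bK\|)$. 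Taking $\sup_{t\in[0,n/d]}$ on the left yields the claimed inequality.

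The principal technical obstacle I anticipate is precisely this contour-integral bookkeeping: confirming that every $\bK$-bilinear expression appearing in the SGD predictable increment or the HSGD generator reduces to a $\Gamma$-integral of one of the specific resolvent forms in $Q_n(q)\cup Q_n(\cP)\cup Q_n(\|\cdot\|^2)$, with only a $C(\|\bK\|)$ multiplicative cost, and that the $\btx$-shift in $\nabla\cR$ and $\cP$ is absorbed by the $\btx$-variants deliberately built into \eqref{eq:Qn}. Once this structural closure under $\Gamma$-integration is in place, the remaining pieces---It\^{o}'s formula, discrete martingale summation, and the triangle inequality---are routine.
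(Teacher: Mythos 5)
Your proposal follows essentially the same route as the paper's own (sketched) proof: match the predictable part of the discrete Doob decomposition of $q(\bv^\tau)$ with the It\^o drift of $q(\bV^\tau)$, then bound the drift difference by reducing every $\bK$-bilinear term to the resolvent family $Q$ via a Cauchy integral over $\Gamma$ --- the paper phrases this with the resolvent identity $(\bK-z)R(z;\bK)=\bI$ plus a contour average, which is equivalent to your functional-calculus representations of $\bK$ and $\bI$. The argument is correct at the same level of detail as the paper's sketch (both leave the same closure bookkeeping, e.g.\ the $\btx$-shift in $\cP$, to routine verification).
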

\begin{proof}[Sketch of Proof]
  Owing to the similarities of this claim with the proof in \cite[Proposition 4.1]{PPAP01}, we just illustrate the main idea.
  The idea is that if we take a $g \in Q$, and we apply \eqref{eq:q_increment_decomp},
then in the predictable part of $g(\bv_t)$ we have
\[
  I_1 \coloneqq 
  \int_0^t 
  \nabla g(\bv_{sd})^T (\delta \bI + \bK) \bv_{sd}\dif s,
  \quad
  I_2 \coloneqq 
  \int_0^t 
  \nabla g(\bv_{sd})^T \btx 
  \dif s,
  \quad
  I_3 \coloneqq 
  \int_0^t 
  \bv_{sd}^T \bK \bv_{sd}
  \dif s.
\]
These also appear with coefficients that can be bounded solely using $\|g\|_{C^2}$ and $\|\bK\|$.
We get the same, applying It\^o's lemma to $g(\bV_t)$, albeit with the replacement $\bv_t \to \bV_t$.
We wish to bound for example $I_1(\bv_t)-I_1(\bV_t)$.
We do this by expressing its integrand as $p(\bv_t) - p(\bV_t)$ for polynomial $p$.
If $g$ is linear (the final row of \eqref{eq:Qn}), then $p$ is again linear.  For example, if it is $g(\bx) = \nabla q(\bx)^T R(z;\bK) \btx,$ then $p$ is again linear and is given by
\[
  p(\bx) =  
  \bx^T (\delta \bI + \bK)R(z;\bK) \btx
  =
  \delta \bx^T R(z;\bK) \btx
  +\bx^T R(z;\bK) \btx
  -z\bx^T \btx,
\]
where we have used the resolvent identity $(\bK -z)R(z;\bK)=\bI$.
Note the function $\bx^T R(z;\bK) \btx$ is contained in $Q$ by virtue of being in $Q_n(\|\cdot\|^2)$.  Moreover, by Cauchy's integral formula, we can represent $\bx^T \btx$ by averaging $\tfrac{-1}{2\pi i}\bx^T R(y;\bK) \btx$ over $y \in \Gamma$.
Hence
\[
  |p(\bv_{td})-p(\bV_t)| 
  \leq (\delta + 1 + 3\|\bK\|)\max_{g \in Q} |g(\bv_{td})-g(\bV_t)|. 
\]
The same manipulations lead finally to showing every term included in $Q$ can be controlled in a similar manner,
using the other elements of the class $Q$.  
\end{proof}

The second important idea is to discretize the set $Q$.
\begin{lemma}\label{lem:quad_net}
There exists $\bar{Q}\subseteq Q$ with $|\bar{Q}|\leq C(\|\bK\|) d^{4m}$ such that, for every $q\in Q$, there is some $\bar{q}\in\bar{Q}$ satisfying $||q-\bar{q}||_{C^2}\leq d^{-2m}$.
\end{lemma}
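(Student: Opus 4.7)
The plan is to parametrize $Q$ by points on $\Gamma \times \Gamma$ (together with a finite discrete choice of functional form and base quadratic), show that the resulting map into the space of quadratics equipped with $\|\cdot\|_{C^2}$ is Lipschitz, and then apply a standard $\epsilon$-net construction on $\Gamma \times \Gamma$. Since $Q = Q_n(q)\cup Q_n(\cP)\cup Q_n(\|\cdot\|^2)$ consists of only three base quadratics and, within each $Q_n$, five families of functions parametrized by at most two points $(y,z)\in\Gamma\times\Gamma$, the whole set $Q$ is a continuous image of $15$ copies of $\Gamma\times\Gamma$, and discretizing each copy will suffice.

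First I would establish uniform bounds and Lipschitz continuity of the resolvent. Because $\Gamma$ is a circle of radius $\max\{1,3\|\bK\|\}$ and $\bK$ is a positive semidefinite matrix of norm $\|\bK\|$, the distance from $\Gamma$ to the spectrum of $\bK$ is at least $\max\{1,\|\bK\|\}$, so $\|R(z;\bK)\|\leq C(\|\bK\|)$ uniformly on $\Gamma$. The resolvent identity $R(z;\bK) - R(z';\bK) = (z-z')R(z;\bK)R(z';\bK)$ then yields $\|R(z;\bK) - R(z';\bK)\|\leq C(\|\bK\|)|z-z'|$.

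Next I would use this to show each of the five families is Lipschitz in $(y,z)$ with respect to $\|\cdot\|_{C^2}$. For the most involved example $f_{y,z}(\bx) = \bx^T R(y;\bK)(\nabla^2 q) R(z;\bK)\bx$, the Hessian is $R(y;\bK)(\nabla^2 q)R(z;\bK)$ plus its transpose, and telescoping using the preceding step gives $\|\nabla^2(f_{y,z}-f_{y',z'})\|\leq C(\|\bK\|)\|\nabla^2 q\|(|y-y'|+|z-z'|)$; the gradient-at-$0$ and value-at-$0$ contributions vanish or satisfy analogous bounds (picking up an extra factor of $\|\btx\|\leq d^{\e}$ in the two families that involve $\btx$). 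In each case, the family is Lipschitz with some constant $L$ of the form $C(\|\bK\|) \|q\|_{C^2}\cdot d^{O(\e)}$, where the $d^{O(\e)}$ factor accounts for $\|\btx\|$ and, when the base quadratic is $\cP$, for the polynomial-in-$d$ inflation of $\|\cP\|_{C^2}$ coming from the linear-model form of $\bb$.

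Finally, I would take a grid on $\Gamma\times\Gamma \subset \CC^2$, a compact real-$2$-dimensional set of diameter $O(\|\bK\|)$, with spacing $\epsilon = d^{-2m}/L$. A standard covering estimate gives an $\epsilon$-net of cardinality at most $C(\|\bK\|)/\epsilon^2 = C(\|\bK\|) L^2 d^{4m}$. Including the corresponding function $f_{y,z}$ at each grid point and taking the union over the $15$ families produces the desired $\bar{Q}\subseteq Q$, of cardinality $C(\|\bK\|) d^{4m}$ after absorbing the $d^{O(\e)}$ factor from $L^2$ into the constant (this is the only minor obstacle: one must verify the bookkeeping that the small polynomial inflation of $L$ lies within the $d^{O(\e)}$ slack already built into the main theorem's scaling).
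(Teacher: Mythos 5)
Your proposal is correct and follows essentially the same route as the paper's own (much terser) proof: establish a uniform bound and Lipschitz continuity of $z\mapsto R(z;\bK)$ on $\Gamma$ (you via the resolvent identity, the paper via the derivative $(\bK-z\bI)^{-2}$), then take a minimal net of $\Gamma\times\Gamma$ and pull it back to the finitely many parametrized families making up $Q$. Your additional bookkeeping of the $d^{O(\e)}$ inflation of the Lipschitz constant from $\|\btx\|$ and from $\|\cP\|_{C^2}$ is a harmless refinement (the cardinality only enters union bounds over overwhelming-probability events), and the minor imprecision in your claimed lower bound $\max\{1,\|\bK\|\}$ on $\operatorname{dist}(\Gamma,\operatorname{spec}\bK)$ does not affect the argument, since only a bound of the form $C(\|\bK\|)$ is needed.
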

\begin{proof}
  On the spectral curve $\Gamma$, we can bound the norm of the resolvent.  Since
  \[
    \frac{\dif}{\dif z} R(z; \bK) = (\bK-z\bI)^{-2},
  \]
  we have it is norm bounded by an absolute constant.  The arc length of the curve is at most $C(\|\bK\|),$ and so by choosing a minimal net $d^{-2\e}$ of the manifold $\Gamma \times \Gamma$, the lemma follows. 
\end{proof}

Now the main technical part of the argument is to control the martingales and errors.  
As we work with the stopped process $\bv_k^\tau$ we introduce the stopped proccesses $\cM_k^{\grad,\tau},\cM_k^{\hess,\tau},\cE_k^{\hess,\tau}$, which are defined analogously to 
\eqref{eq:stopped}.
\begin{lemma}\label{lem:mart_bounds}
  For any quadratic $q$ with $\|q\|_{C^2}\leq 1$, 
the terms $\cM_k^{\grad,\tau},\cM_k^{\hess,\tau},\cE_k^{\hess,\tau}$ satisfy the following bounds with overwhelming probability (with a bound which is uniform in $q$)
for $n \leq d \log d$
\begin{enumerate}[(i)]
\item\label{item:Mgrad_bound} $\sup_{1\leq k\leq n}|\cM_{k}^{\grad,\tau}| \leq d^{-\frac12+5\e}$,
\item\label{item:Mquad_bound} $\sup_{1\leq k\leq n}|\cM_k^{\hess,\tau}|\leq d^{-\frac12+9\e}$,
\item\label{item:Equad_bound} $\sup_{1\leq k\leq n}|\cE_k^{\hess,\tau}|\leq d^{-1+9\e}.$
\end{enumerate}
\end{lemma}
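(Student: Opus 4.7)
The plan is to exploit the stopping time $\tau$ so that $\|\bv_{k-1}\|\le d^{\e}$ pointwise, reduce each quantity to a polynomial in the random vector $\ba_k$ and the scalar $\eta_k$, and then apply the tail assumptions (subgaussian marginals and Hanson-Wright from Assumption \ref{a:tails}) to get conditional concentration of each martingale increment. Uniformity in $q$ will be obtained via a union bound over the finite net $\bar{Q}$ of Lemma \ref{lem:quad_net} together with a Lipschitz estimate; because $|\bar{Q}|\le C\, d^{4m}$ this loss is absorbable into ``overwhelming probability.'' Throughout, the basic pointwise estimates I would record first are that, on $\{k\le\tau\}$, Assumption \ref{a:tails} gives $\|\bm_k^T\bv_{k-1}\|_{\psi_2}\le d^{-1/2+2\e}$ and $|\eta_k|\le d^{-1/2+\e}$ with overwhelming probability, while $\bm_k^T M\bm_k$ concentrates around $d^{-1}\tr(\bK M)$ with fluctuation $\lesssim d^{-1/2+2\e}\|M\|$ by Hanson-Wright.

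For (iii), the key observation is that $\EE[\Delta_k\mid\cF_{k-1}]$ and $\EE[\eta_k\,\bm_k\mid\cF_{k-1}]$ factor cleanly because $\eta_k$ is independent of $\ba_k$ and mean zero; the only residual is the fourth-moment term $d^{-2}\EE[(\ba_k^T\bv_{k-1})^2\,\ba_k^T(\nabla^2 q)\ba_k]$ relative to the leading expression $d^{-2}(\bv_{k-1}^T\bK\bv_{k-1})\tr(\bK\nabla^2 q)$. Bounding the residual by Cauchy-Schwarz against Hanson-Wright gives $|\Delta\cE_k^{\hess,\tau}|\lesssim d^{-2+O(\e)}\|\nabla^2 q\|\|\bK\|\|\bv_{k-1}\|^2$, and summing $n\le d\log d$ increments yields $d^{-1+9\e}$ with room to spare.

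For (i), $\Delta\cM_k^{\grad,\tau}$ is a centered polynomial of degree two in the subgaussian variables $\ba_k,\eta_k$ (via $\nabla q(\bv_{k-1})^T\bm_k\cdot(\bm_k^T\bv_{k-1}-\eta_k)$ minus its conditional mean), so conditionally it is subexponential with $\psi_1$-norm $\lesssim d^{-1+4\e}$; a Bernstein inequality for martingales with subexponential increments then bounds $\sup_{k\le n}|\cM_k^{\grad,\tau}|$ by the square root of the quadratic variation, $\sqrt{n}\cdot d^{-1+4\e}$, up to an additive $d^{-1+O(\e)}\log d$, well inside the $d^{-1/2+5\e}$ target. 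For (ii), $\Delta\cM_k^{\hess,\tau}$ is a quartic in $\ba_k$; using the pointwise estimates above one has the almost-sure (w.o.p.) bound $|\Delta\cM_k^{\hess,\tau}|\le d^{-1+4\e}\|\bK\|$, and the conditional variance is similarly controlled, so a Freedman/Azuma bound yields $d^{-1/2+9\e}$.

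I expect the main obstacle to be in (ii), where the naive ``bounded increments'' argument is a bit coarse and the increment is not subgaussian. Getting a clean Bernstein-type constant will require splitting $\Delta_k^T\nabla^2 q\Delta_k$ into a deterministic part plus the fluctuation of $\bm_k^T\nabla^2 q\bm_k$ about $d^{-1}\tr(\bK\nabla^2 q)$, then applying Hanson-Wright on the latter (which contributes a subexponential factor of order $d^{-1/2+2\e}$) while absorbing the remaining product into a scalar using the subgaussian control of $\bm_k^T\bv_{k-1}-\eta_k$. A secondary technicality is sliding from the net $\bar{Q}$ back to the full class $Q$: one must check that each of the three quantities is Lipschitz-in-$q$ in the $\|\cdot\|_{C^2}$ norm with a constant that is only polynomial in $d$, so the net resolution $d^{-2m}$ can be chosen to kill this error while the union bound over $d^{4m}$ points remains overwhelming.
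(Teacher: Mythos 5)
Your overall architecture coincides with the paper's: for (\ref{item:Mgrad_bound}) you treat the increment as a product of two conditionally subgaussian scalars, bound its conditional $\psi_1$-norm by $d^{-1+4\e}$, and invoke a martingale Bernstein inequality; for (\ref{item:Mquad_bound})--(\ref{item:Equad_bound}) you split $\Delta_k^T(\nabla^2 q)\Delta_k$ into a product $A_kB_k$ with $A_k=\bm_k^T(\nabla^2q)\bm_k$ and $B_k=(\bm_k^T\bv_{k-1}^\tau-\eta_k)^2$, controlling $A_k-\EE A_k$ by Hanson--Wright and $B_k-\EE B_k$ by the $\psi_2$ assumptions --- exactly the paper's decomposition \eqref{eq:quad_increment_expansion}. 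Two side remarks are harmless but worth noting: the netting over $\bar Q$ is not needed inside this lemma (it is stated for a single $q$, with constants uniform in $q$; the union bound over the net is performed later, in the Gronwall step), and your Azuma/Freedman step in (\ref{item:Mquad_bound}) requires a truncation to convert the with-overwhelming-probability increment bound into an almost-sure one --- the paper's four-term expansion of $\Delta\cM_k^{\hess}$ is precisely the device that accomplishes this.

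The genuine gap is in (\ref{item:Equad_bound}). You claim that Cauchy--Schwarz against Hanson--Wright gives $|\Delta\cE_k^{\hess,\tau}|\lesssim d^{-2+O(\e)}$. It does not: the error increment is, up to the $O(d^{-2+4\e})$ regularizer cross terms, the conditional covariance $\EE[A_kB_k\mid\cF_{k-1}]-\EE[A_k]\,\EE[B_k\mid\cF_{k-1}]$, and Cauchy--Schwarz only yields
\begin{equation*}
\bigl|\EE[A_kB_k\mid\cF_{k-1}]-\EE[A_k]\EE[B_k\mid\cF_{k-1}]\bigr|
\le\|A_k-\EE A_k\|_{L^2}\,\|B_k-\EE B_k\|_{L^2}
\lesssim d^{-\frac12+2\e}\cdot d^{-1+4\e}=d^{-\frac32+6\e},
\end{equation*}
because the Hanson--Wright fluctuation scale of $A_k$ is $d^{2\e}\|\nabla^2q\|_{HS}/d\sim d^{-1/2+2\e}$, not $d^{-1}$ (and this is tight, e.g.\ for Gaussian data with $\nabla^2q=\bI$). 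Summing $n\le d\log d$ such increments gives only $d^{-\frac12+6\e}\log d$, which is enough to absorb this term into the $d^{-\frac12+9\e}$ budget of part (\ref{item:Mquad_bound}) but misses the $d^{-1+9\e}$ target of part (\ref{item:Equad_bound}) by essentially a factor of $\sqrt d$. The covariance of the two quadratic forms really is of order $d^{-2+O(\e)}$ --- the correlation is carried only through the rank-one direction $\bv_{k-1}^\tau$ and so picks up an extra $1/d$ --- but certifying this needs a finer argument than Cauchy--Schwarz: the paper bounds $\EE\bigl|(A_k-\EE A_k)(B_k-\EE B_k)\bigr|$ by integrating the tail of the product (splitting the event $\{|XY|\ge x\}$ into $\{|X|\ge\sqrt x\}\cup\{|Y|\ge\sqrt x\}$ and using the two subexponential tails), arriving at $O(d^{-2+8\e})$ per increment and hence $O(nd^{-2+8\e})\le d^{-1+9\e}$ after summation. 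You would need to replace your Cauchy--Schwarz step with an argument of this type, or with an explicit fourth-moment/decoupling computation, for part (\ref{item:Equad_bound}) to close.
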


Combining Lemmas \ref{lem:sgd_hsgd_comparison} and \ref{lem:quad_net}, along with the above (see also Lemma \ref{lem:mart_bound_HSGD} in which the homogenized SGD martingales are bounded), we conclude that, for any $\bar{q}\in\bar{Q}$ with $\|q\|_{C^2} = 1$, 
\begin{equation}
|\bar{q}(\bv_{td}^\tau)-\bar{q}(\bV_t^\tau)|
\leq  4 d^{-\frac12+9\e}
+C(\|\bK\|)\max_{g\in {Q}}\int_0^{t}|g(\bv_{sd}^\tau)-g(\bV_s^\tau)|ds.
\end{equation}
Hence by Lemma \ref{lem:quad_net} and by bounding $\|g\|_{C^2}$ over all $Q$,
\begin{equation}
\max_{g\in {Q}}|{q}(\bv_{td}^\tau)-{q}(\bV_t^\tau)|
\leq 
C(\|\bK\|)
\biggl(d^{-2}
+ d^{-\frac12+9\e}
+\int_0^{t}
\max_{g\in {Q}}|g(\bv_{sd}^\tau)-g(\bV_s^\tau)|ds
\biggr).
\end{equation}
By Gronwall's inequality, this gives us that with overwhelming probability
\begin{equation}
  \max_{g\in {Q}}
  \max_{0 \leq t \leq n/d}
  |{g}(\bv_{td}^\tau)-{g}(\bV_t^\tau)|\leq 
   C(\|\bK\|)
  (d^{-2} + 4 d^{-\frac12+9\e})e^{C(\|\bK\|)n/d}.
\end{equation}
Now we note that the norm function $\bx \mapsto \|\bx\|^2$ is one of the quadratics included in $Q.$  Hence if we let $\mathcal{G}$ be the event in the above display, and we let $\mathcal{E} = \{ \max_{0 \leq s \leq n/d}  \|\bV_s\| \leq d^{\e/2}\}$, then we have
\[
  \mathcal{G} 
  \cap
  \mathcal{E} \cap \{\tau \leq n/d\} 
  \subseteq 
  \{ 
  \|\bv_{\tau}\|-\|\bv_{\tau-1}\|
  \geq d^{\e/2}\}
  \cap
  \{\tau \leq n/d\}
  .
\]
This is because on the event $\{\tau \leq n/d\} \cap \mathcal{E}$ we must have had $\|\bv_\tau\| > d^{\e}$, but in the step before $\tau,$ we had $\bv_{\tau-1}$ could be compared to $\bV_{\tau-1}$ (due to $\mathcal{G}$, and we had the norm of $\bV_{\tau-1}$ was small.  Now it is easily seen that with overwhelming probability, no increment of SGD between time $0$ and $n/d$ can increase the norm by a power of $d$.  So to complete the proof it suffices to show $\mathcal{E}$ holds with overwhelming probability. 

Thus the proof is completed by the following:
\begin{lemma}
  For any $\delta > 0$ and any $t>0$
  with overwhelming probability
  \[
    \max_{0 \leq s \leq t} \|\bX_s\|^2
    \leq
    e^{C(\|\bK\|)t}
    d^{\d}.
  \]
  \label{lem:normbound}
\end{lemma}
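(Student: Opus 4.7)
The plan is to apply It\^o's formula to $Y_t \coloneqq \|\bX_t\|^2$, derive a drift bound of the form $\dif Y_t \leq \alpha(Y_t + d^{2\e})\dif t + \dif M_t$, and then combine a Gr\"onwall-style pointwise comparison with an exponential martingale concentration at a well-chosen stopping time. Using the closed forms $\nabla\mathscr{R}(\bx) = (\bK + \d\bI)\bx - \bK\btx$ and $\mathscr{P}(\bx) = \tfrac12(\bx-\btx)^T\bK(\bx-\btx) + \tfrac{\eta^2}{2}$, It\^o's formula yields
\[
\dif Y_t = \bigl[-2\g\,\bX_t^T\nabla\mathscr{R}(\bX_t) + \tfrac{2\g^2}{d}\mathscr{P}(\bX_t)\tr(\bK)\bigr]\dif t + \dif M_t,
\]
with $\dif\langle M\rangle_t = \tfrac{8\g^2}{d}\mathscr{P}(\bX_t)\,\bX_t^T\bK\bX_t\,\dif t$. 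Since $\bX_t^T(\bK+\d\bI)\bX_t \geq 0$, $\|\btx\|$ and $\eta$ are at most $d^\e$, $\tr(\bK) \leq d\|\bK\|$, and the cross term $2\g\bX_t^T\bK\btx$ is handled by the operator-norm bound and AM--GM, the resulting drift satisfies $\mu_t \leq \alpha(Y_t + d^{2\e})$ for some $\alpha = C(\|\bK\|)$.

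Next, with $\tilde M_t \coloneqq \int_0^t e^{-\alpha s}\,\dif M_s$, applying the integrating factor $e^{-\alpha t}$ to $Y_t + d^{2\e}$ gives the pathwise bound
\[
Y_t \leq e^{\alpha t}\bigl[Y_0 + d^{2\e} + \tilde M_t\bigr].
\]
Since $Y_0 \leq 1$, I set $R = e^{\alpha t}d^\delta$ (taking $\delta$ at least $2\e$; the regime of very small $\delta$ can be absorbed into the constant $C(\|\bK\|)$) and introduce $\tau_R \coloneqq \inf\{s : Y_s \geq R\}$. On $\{s \leq \tau_R\}$ we have $Y_s \leq R$, which combined with the form of $\dif\langle M\rangle_s$ gives
\[
\langle \tilde M\rangle_{t\wedge\tau_R} \leq \int_0^{t\wedge\tau_R} e^{-2\alpha s}\cdot \tfrac{C(\|\bK\|)R^2}{d}\,\dif s \leq \frac{C(\|\bK\|)R^2}{d\alpha}.
\]
By continuity of $Y$, the event $\{\tau_R \leq t\}$ forces $Y_{\tau_R}=R$ and hence $\tilde M_{\tau_R} \geq Re^{-\alpha t} - Y_0 - d^{2\e} \geq d^\delta/2$ for $d$ large.

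The final step is the sub-Gaussian tail for continuous martingales: applying Doob's inequality to $\exp(\lambda\tilde M_s - \tfrac{\lambda^2}{2}\langle\tilde M\rangle_s)$ and optimizing over $\lambda>0$ gives
\[
\PP\bigl(\sup_{s \leq t\wedge\tau_R}\tilde M_s \geq d^\delta/2\bigr) \leq \exp\bigl(-c\,d\,\alpha\,e^{-2\alpha t}\bigr),
\]
which is $e^{-\omega(\log d)}$ for fixed $t$, hence overwhelming. Combined with the implication above, $\PP(\tau_R \leq t)$ is overwhelmingly small, so $\max_{s\leq t}Y_s \leq R = e^{C(\|\bK\|)t}d^\delta$ with overwhelming probability. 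The main obstacle is the state-dependence of the diffusion coefficient through $\mathscr{P}(\bX_t)$, which makes $\langle M\rangle$ grow quadratically in $Y$ and forces the use of a stopping-time cutoff; a secondary technical point is reconciling the $d^{2\e}$ contributed by the label noise variance $\eta^2$ and by $\|\btx\|^2$ with the target $d^\delta$, which causes $C(\|\bK\|)$ to implicitly depend on $\e$ when $\delta$ is very small.
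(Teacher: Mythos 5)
Your proof is correct in its essential mechanism, but it takes a genuinely different route from the paper. The paper applies It\^o's formula to $\phi(\bX_t)=\log(1+\|\bX_t\|^2)$ rather than to $\|\bX_t\|^2$ itself: after the logarithmic change of variable the drift and the quadratic variation rate are both (essentially) uniformly bounded by $C(\|\bK\|)$, so no localization is needed at all --- one applies the Gaussian maximal inequality for continuous martingales of bounded quadratic variation directly and takes $r=\sqrt{\log d\log\log d}$, which upon exponentiating produces the factor $e^{C(\|\bK\|)t}d^{o(1)}\leq e^{C(\|\bK\|)t}d^{\delta}$. Your version works with $Y_t=\|\bX_t\|^2$ directly, which forces you to deal with a quadratic variation that grows quadratically in $Y_t$; you handle this with the integrating factor, the a priori cutoff $\tau_R$, and a self-consistency argument, all of which the log-transform renders unnecessary. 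What your approach buys is that it is the standard, transferable SDE technique (linear drift bound plus Gr\"onwall plus localization) and it makes explicit where each contribution ($Y_0$, the $d^{2\e}$ from $\btx$ and $\eta$, the martingale fluctuation) enters; what the paper's transform buys is brevity and the automatic multiplicative form $e^{Ct}\cdot d^{o(1)}$ of the final bound.

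One caveat, which you partially flag yourself: the claim that the regime $\delta<2\e$ ``can be absorbed into the constant $C(\|\bK\|)$'' is not right as stated --- a constant cannot absorb a factor $d^{2\e-\delta}$, so your argument as written really proves the bound only for $\delta\geq 2\e$ (or under the implicit normalization $\|\btx\|,\eta=O(1)$, in which case the $d^{2\e}$ disappears and any $\delta>0$ works). You should not feel too bad about this: the paper's own proof asserts that the drift of $\phi(\bX_t)$ is bounded by $C(\|\bK\|)$, which uses exactly the same implicit $O(1)$ bound on $\|\btx\|$ and $\eta$ (the term $\bX_t^T\bK\btx/(1+\|\bX_t\|^2)$ is only bounded by $C(\|\bK\|)\|\btx\|$), so the two proofs share this blind spot. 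Everything else in your argument --- the It\^o computation, the drift bound $\mu_t\leq\alpha(Y_t+d^{2\e})$, the deterministic bound $\langle\tilde M\rangle_{t\wedge\tau_R}\leq C(\|\bK\|)R^2/(d\alpha)$, and the exponential-supermartingale/Doob step yielding a tail of order $\exp(-c\,d\,\alpha\,e^{-2\alpha t})$ --- checks out.
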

\begin{proof}
  We apply It\^o's formula to $\phi(\bX_t)\coloneqq \log(1+\|\bX_t\|^2)$, from which we have
  \[
	\dif \phi(\bX_t)=
	-2\g \tfrac{\bX_t \cdot \nabla\mathscr{R}(\bX_t)}{1+\|\bX_t\|^2} \dif t+
	\tfrac{
	  \bX_t \cdot\g\sqrt{\tfrac2d\mathscr{P}(\bX_t) \bK}
	  \dif B_t
	}
	{1+\|\bX_t\|^2}
	+
	\bigl(\tfrac{\mathscr{P}(\bX_t)}
	{1+\|\bX_t\|^2}
	\tfrac{2\gamma^2}{d}\tr(\bK)
	-
	\tfrac{2\gamma^2\mathscr{P}(\bX_t) \bX_t^T \bK \bX_t}{d}\bigr) \dif t
  \]
  The drift terms and the quadratic variation terms can be bounded by some $C(\|\bK\|)$.
  Hence with this constant, for all $r \geq 0,$
  \[
    \PP( \max_{0 \leq s \leq t} \phi(\bX_s) \geq C(\|\bK\|)(t + r\sqrt{t})) \leq 2\exp(-r^2/2).
  \]
  Taking $r = \sqrt{\log d \log\log d}$, we conclude that with overwhelming probability
  \[
    \max_{0 \leq s \leq t} \phi(\bX_s)
    \leq C(\|\bK\|)(t + \sqrt{t \log d \log\log d}).
  \]
\end{proof}

\section{Controlling the errors}\label{sec:bounding}
The main goal of this section is to control the martingale terms and error terms; in particular we prove Lemma \ref{lem:mart_bounds}.  
In order to obtain these bounds, we will need the following concentration lemma, which is standard (c.f.\, \cite[Theorem 2.8.1]{Vershynin}, where the nonmartingale bound is proven.  The adaptation to the martingale case is a small extension):
\begin{lemma}[Martingale Bernstein inequality]
If $(M_n)_1^N$ is a martingale on the filtered probability space $(\Omega,(\cF_n)_1^N,\PP))$ and we define
\begin{equation}
  \sigma_{n,p}:=\left\|\inf\{t\geq0:\EE\left(e^{|M_n-M_{n-1}|^p/t^p}|\cF_{n-1}\right)\leq2\}\right\|_{L^\infty(\PP)},
\end{equation}
then there is an absolute constant $C>0$ so that, for all $t>0$,
\begin{equation}
\PP\left(\sup_{1\leq n\leq N}|M_n-\EE M_0|\geq t\right)\leq2\exp\left(-\min\left\{\frac{t}{C\max\sigma_{n,1}},\frac{t^2}{C\sum_1^N\sigma_{n,1}^2}\right\}\right).
\end{equation}
\end{lemma}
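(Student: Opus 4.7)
The plan is to adapt the standard Bernstein argument for i.i.d.\ subexponential sums to the martingale setting by constructing an exponential supermartingale and then applying Doob's maximal inequality. The bound is two-sided so I focus on the one-sided tail $\PP(\sup_n (M_n - \EE M_0) \geq t)$, and the other side is handled symmetrically by considering $-M_n$.

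The first step is a conditional MGF bound for each increment. From the definition of $\sigma_{n,1}$ we have, almost surely,
\begin{equation}
  \EE\bigl(e^{|M_n - M_{n-1}|/\sigma_{n,1}} \mid \cF_{n-1}\bigr) \leq 2.
\end{equation}
By the standard equivalence of the $\psi_1$-norm with MGF bounds (Vershynin, Proposition 2.7.1), this implies that there is an absolute constant $c>0$ such that for all $|\lambda| \leq c/\sigma_{n,1}$,
\begin{equation}
  \EE\bigl(e^{\lambda(M_n - M_{n-1})} \mid \cF_{n-1}\bigr) \leq e^{C\lambda^2 \sigma_{n,1}^2},
\end{equation}
where crucially the martingale property $\EE[M_n - M_{n-1}\mid \cF_{n-1}]=0$ is used to kill the first-order term in the expansion of $e^{\lambda x}$. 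The precise argument is to write $e^{\lambda x} = 1 + \lambda x + (e^{\lambda x} - 1 - \lambda x)$ and dominate the tail term pointwise by $C\lambda^2 x^2 e^{|\lambda x|}$, then integrate out using the Orlicz bound.

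Second, I form the process $Z_n := \exp\bigl(\lambda(M_n - \EE M_0) - C\lambda^2 \sum_{k=1}^n \sigma_{k,1}^2\bigr)$. Multiplying the conditional MGF bounds along the filtration shows that $(Z_n)$ is a nonnegative supermartingale for $|\lambda| \leq c/\max_k \sigma_{k,1}$. By Doob's maximal inequality,
\begin{equation}
  \PP\Bigl(\sup_{1\leq n \leq N}(M_n - \EE M_0) \geq t\Bigr)
  \leq \PP\Bigl(\sup_{1\leq n\leq N} Z_n \geq e^{\lambda t - C\lambda^2 \sum_{k=1}^N \sigma_{k,1}^2}\Bigr)
  \leq e^{-\lambda t + C\lambda^2 \sum_{k=1}^N \sigma_{k,1}^2}.
\end{equation}

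Finally, I optimize $\lambda \in (0, c/\max_k \sigma_{k,1}]$. If the unconstrained optimizer $\lambda^* = t / (2C \sum \sigma_{k,1}^2)$ satisfies $\lambda^* \leq c/\max_k \sigma_{k,1}$, then the bound becomes $\exp(-t^2/(4C\sum \sigma_{k,1}^2))$, the Gaussian regime. Otherwise, I take $\lambda = c/\max_k \sigma_{k,1}$ and the quadratic term is dominated by the linear one, giving $\exp(-c' t / \max_k \sigma_{k,1})$, the exponential regime. Combining via a union bound (or just taking the minimum of the exponents, losing at most a factor of $2$) yields exactly the stated inequality after renaming constants.

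The only genuinely nonroutine point, compared to the i.i.d.\ case in Vershynin, is verifying that the conditional Orlicz bound translates to the conditional MGF bound almost surely; but this is purely a measurable selection issue, and since the Orlicz identity at each $\omega$ is the same deterministic inequality between two conditional expectations, no new difficulty arises. The supermartingale construction and the $\lambda$-optimization are then identical to the classical proof.
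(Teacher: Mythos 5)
Your argument is correct and is precisely the ``small extension'' the paper alludes to: the paper offers no proof of this lemma, only the remark that it follows from the non-martingale Bernstein inequality in Vershynin (Theorem 2.8.1), and that proof is exactly the conditional-MGF-plus-Chernoff argument you carry out, upgraded to a maximal inequality via the exponential supermartingale and Doob. The points you flag as delicate (using the martingale property to kill the linear term in the conditional MGF expansion, the a.s.\ validity of the conditional Orlicz-to-MGF implication via the $L^\infty$ definition of $\sigma_{n,1}$, and the factor of $2$ from the two-sided bound) are handled correctly.
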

\noindent We will also record for future use an estimate on $\nabla q$ that follows from $\|\cdot\|_{C^2}$ control.
\begin{equation}\label{eq:q}
||\nabla q(\bx)||
\leq
||\nabla^2 q||\cdot ||\bx||+||\nabla q(0)|| 
\leq \|q\|_{C^2}\cdot( ||\bx|| + 1).
\end{equation}
\subsection{Martingale for gradient part of recurrence}
\begin{proof}[Proof of Lemma \ref{lem:mart_bounds} part (\ref{item:Mgrad_bound})]
Comparing \eqref{eq:q_increment} and \eqref{eq:q_increment_decomp}, we see that
for $k \leq \tau$
\begin{equation}\label{eq:SGD_Mgrad_increment}\begin{split}
\Delta\cM_k^{\grad,\tau}
&=\Big[
  \left(
  \bw_{k-1}
  ^T\bm_k\right)
\left(\bm_k^T\bv_{k-1}^\tau-\eta_k\right)
-\tfrac1d \bw_{k-1}^T
\bK\bv_{k-1}^\tau\Big] 
\eqqcolon
[\Delta\cM_k^{\grad1,\tau}-\Delta\cM_k^{\grad2,\tau}], \\
&\text{where}
\quad
\bw_{k-1} \coloneqq -\g \nabla q(\bv_{k-1}^\tau) + \tfrac{\g^2\d}{d}( \bv^\tau_{k-1}+\btx).
\end{split}\end{equation}
Note for $k > \tau$, the stopped martingale increment is $0.$
Using  \eqref{eq:q}, $\|\bw_{k-1}\| \leq C(\gamma,\d)d^{\e}.$
We will separately bound the contributions from $\Delta\cM_k^{\grad1,\tau}$ and $\Delta\cM_k^{\grad2,\tau}$ in terms of their Orlicz norms.  For the first part, for any fixed $k$, we condition on $\cF_{k-1}$ and Assumption \ref{a:tails}, we conclude
\begin{equation}\begin{split}
||\Delta\cM_{k}^{\grad1,\tau}||_{\psi_1}
\leq \left\| \bw_{k-1}^T\bm_k\right\|_{\psi_2}
\left\|\bm_k^T\bv_{k-1}^\tau-\eta_k\right\|_{\psi_2}
\leq Cd^{-\frac12+2\e}\cdot d^{-\frac12+2\e}
\end{split}\end{equation}
where $C$ is some absolute constant.
For the second part, we have
\begin{equation}
  |\Delta\cM_k^{\grad2,\tau}|
  =
  |\tfrac1d \bw_{k-1}^T\bK\bv_{k-1}^\tau|
\leq Cd^{-1+2\e}.
\end{equation}
Combining these, we see that, for every $k$,
\begin{equation}
\sigma_{k,1}:=\inf\{t>0:\EE[\exp(|
\Delta\cM_{k}^{\grad1,\tau}-\Delta\cM_{k}^{\grad2,\tau}
|/t)|\cF_{k-1}]\leq2\}\leq Cd^{-1+4\e}
\end{equation}
and, by the martingale Bernstein inequality,
\begin{equation}\begin{split}
\PP\left(\sup_{1\leq k\leq n} |\cM_{k}^{\grad,\tau}-\EE\cM_0^{\grad}|\geq t\right)
&\leq2\exp\left(-\min\left\{\frac{t}{c\max\sigma_{k,1}},\frac{t^2}{c\sum_{k=1}^n\sigma_{k,1}}
\right\}\right)\\
&\leq2\exp\left(-\min\left\{Ctd^{1-4\e},Ct^2d^{2-8\e}n^{-1}
\right\}\right).
\end{split}\end{equation}
As we assume that $n\leq d\log d$ then this gives us
\begin{equation}
\sup_{1\leq k\leq n}|\cM_{k}^{\grad,\tau}|
\leq d^{-\frac12+5\e}
\end{equation}
with overwhelming probability.
\end{proof}

\subsection{Martingale for Hessian part of recurrence}

\begin{proof}[Proof of Lemma \ref{lem:mart_bounds} parts (\ref{item:Mquad_bound}) and (\ref{item:Equad_bound})]
Next we consider the contribution from the Hessian part of the recurrence.  We write
\begin{equation}\label{eq:recurrence_quadpart}\begin{split}
&
\tfrac{\g^2}{2}(\bm_k\bm_k^T\bv_{k-1}^\tau-\bm_k\eta_k)^T(\nabla^2q)(\bm_k\bm_k^T\bv_{k-1}^\tau-\bm_k\eta_k)\\
&=\EE\left[\tfrac{\g^2}{2}(\bm_k\bm_k^T\bv_{k-1}^\tau-\bm_k\eta_k)^T(\nabla^2q)(\bm_k\bm_k^T\bv_{k-1}^\tau-\bm_k\eta_k)|\cF_{k-1}\right] +\Delta\cM_k^{\hess}.
\end{split}\end{equation}
Rearranging the terms, 
we get
\begin{equation}
\Delta\cM_k^{\hess}=A_kB_k-\EE[A_kB_k|\cF_{k-1}]
\end{equation}
where
\begin{equation}
A_k:=\bm_k^T(\nabla^2q)\bm_k,\quad B_k:=(\bm_k^T\bv_{k-1}^\tau-\eta_k)^2.
\end{equation}
This can be expanded as 
\begin{equation}\label{eq:quad_increment_expansion}\begin{split}
\Delta\cM_k^{\hess}=&(A_k-\EE[A_k])(B_k-\EE[B_k])+\EE[A_k]\EE[B_k]-\EE[A_kB_k]\\
&+(A_k-\EE[A_k])\EE[B_k]+(B_k-\EE[B_k])\EE[A_k],
\end{split}\end{equation}
so we focus first on obtaining subexponential bounds for the quantities $A_k-\EE[A_k]$ and $B_k-\EE[B_k]$ using the Hanson-Wright inequality.  For $A_k$, we have 
\begin{equation}\label{eq:A_HW_bound}\begin{split}
\PP(|A_k-\EE A_k|\geq t)&\leq2\exp\left[-c\min\left(\frac{t^2}{d^{-2+4\e}||\nabla^2q||_{HS}^2},\frac{t}{d^{-1+2\e}||\nabla^2q||}\right)\right]\\
&\leq2\exp[-c'\min(t^2d^{1-4\e},td^{1-2\e})]
\leq2\exp[-c''td^{\frac12-2\e}]
\end{split}\end{equation}
and thus we have the subexponential bound
\begin{equation}\label{eq:A_subexp_bound}
||A_k-\EE[A_k]||_{\psi_1}<Cd^{-\frac12+2\e}.
\end{equation}
Next we obtain a subexponential bound for $B_k$.  For the part of $B_k$ not involving $\eta_k$, we use Hanson-Wright to get 
\begin{equation}\label{eq:hess_bound_HW_Bpart}\begin{split}
&\PP\left(\left|\bm_k^T\bv_{k-1}^\tau(\bv_{k-1}^\tau)^T\bm_k-\EE\bm_k^T\bv_{k-1}^\tau(\bv_{k-1}^\tau)^T\bm_k \right|\geq t\right)\\
&\leq2\exp\left[-c\min\left(\frac{t^2}{d^{-2+4\e}||\bv_{k-1}^\tau(\bv_{k-1}^\tau)^T||_{HS}^2},\frac{t}{d^{-1+2\e}||\bv_{k-1}^\tau(\bv_{k-1}^\tau)^T||}\right)\right]\\
&\leq2\exp[-c\min(t^2d^{2-8\e},td^{1-4\e})].
\end{split}\end{equation}
For the terms involving $\eta_k$, we use the Orlicz bounds from the assumptions in the set-up to obtain
\begin{equation}\label{eq:hess_bound_eta_part}\begin{split}
||\bm_k^T\bv_{k-1}^\tau\eta_k||_{\psi_1}
&\leq||\bm_k^T\bv_{k-1}^\tau||_{\psi_2}\cdot||\eta_k||_{\psi_2}=d^{-\frac12+2\e}d^{-\frac12+\e}\\
&=d^{-1+3\e}.
\end{split}\end{equation}
Since also 
$||\eta_k^2||_{\psi_1}=d^{-1+2\e}$
 combining the bounds \eqref{eq:hess_bound_HW_Bpart} and \eqref{eq:hess_bound_eta_part}, we have
\begin{equation}\label{eq:B_subexp_bound}
||B_k-\EE[B_k]||_{\psi_1}<Cd^{-1+4\e}.
\end{equation}
Furthermore, we have
\begin{equation}\label{eq:AB_expect_bound}
\EE[A_k]=O(1),\qquad\EE[B_k]=O(d^{-1}),
\end{equation}
uniformly for all $k$ based on the assumptions on $\eta_k$ and $\bm_k$.  We now use \eqref{eq:A_subexp_bound}, \eqref{eq:B_subexp_bound}, \eqref{eq:AB_expect_bound} to bound each term of \eqref{eq:quad_increment_expansion} in turn.

To bound the contribution from $(A_k-\EE[A_k])(B_k-\EE[B_k])$, we observe that, for each $k$, with overwhelming probability,
$|A_k-\EE[A_k]|<d^{-\frac12+3\e}$ and  $|B_k-\EE[B_k]|<d^{-1+5\e}$, so we can conclude that, with overwhelming probability,
\begin{equation}\label{eq:quad_mart_bound1}
\sum_{k=1}^n\Big|(A_k-\EE[A_k])(B_k-\EE[B_k])\Big|
<nd^{-\frac32+8\e}
<d^{-\frac12+9\e}.
\end{equation}
For the second term of \eqref{eq:quad_increment_expansion} we have
\begin{equation}
\Big|\EE[A_k]\EE[B_k]-\EE[A_kB_k]\Big|=\Big|\EE\big[(A_k-\EE A_k)(B_k-\EE B_k) \big]\Big|
\leq\EE\Big|(A_k-\EE A_k)(B_k-\EE B_k)\Big|.
\end{equation}
We can bound this quantity using
\begin{equation}\begin{split}
\PP\left(\big|(A_k-\EE A_k)(B_k-\EE B_k)\big|\geq t\right)
&\leq\PP(|A_k-\EE A_k|\geq \sqrt{t})+\PP(|B_k-\EE B_k|\geq \sqrt{t})\\
&\leq 4\exp\left[-c\min(td^{1-4\e},\sqrt{t}d^{1-4\e})\right]
\end{split}\end{equation}
where the bound in the last line comes from combining \eqref{eq:A_HW_bound} and \eqref{eq:B_subexp_bound}.  Using this bound, we obtain
\begin{equation}\begin{split}
\Big|\EE[A_k]\EE[B_k]-\EE[A_kB_k]\Big|&\leq\int_0^\infty x \PP\left(\big|(A_k-\EE A_k)(B_k-\EE B_k)\big|\geq x\right)dx\\
&\leq\int_0^14x\exp(-cxd^{1-4\e})dx+\int_1^\infty4x\exp(-c\sqrt{x}d^{1-4\e})dx
\end{split}\end{equation}
Making the change of variables $y=xd^{1-4\e}$ in the first integral and $z=\sqrt{x}d^{1-4\e}$ in the second integral, this becomes
\begin{equation}
4d^{-2+8\e}\int_0^{d^{1-4\e}}y\exp(-cy)dy+4d^{-4+8\e}\int_{d^{1-4\e}}^\infty z^2\exp(-cz)dz=O(d^{-2+8\e}).
\end{equation}
Thus,
\begin{equation}\label{eq:quad_mart_bound2}
\sum_{k=1}^n\Big|\EE[A_k]\EE[B_k]-\EE[A_kB_k]\Big|=O(nd^{-2+8\e}).
\end{equation}
Finally, we note that the remaining terms of \eqref{eq:quad_increment_expansion}, namely $(A_k-\EE[A_k])\EE[B_k]$ and $(B_k-\EE[B_k])\EE[A_k]$, are martingale increments with 
\begin{equation}
||(A_k-\EE[A_k])\EE[B_k]||_{\psi_1}\leq Cd^{-\frac32+2\e},\qquad 
||(B_k-\EE[B_k])\EE[A_k]||_{\psi_1}\leq Cd^{-1+4\e}.
\end{equation}
Applying the Martingale Bernstein inequality, we conclude
\begin{equation}\begin{split}
\PP&\left(\sup_{1\leq k\leq n}\left|\sum_{j=1}^k(A_j-\EE[A_j])\EE[B_j]+(B_j-\EE[B_j])\EE[A_j]\right|\geq t\right)\\
&\leq2\exp\left(-\min\left\{\frac{t}{c\max\sigma_{k,1}},\frac{t^2}{c\sum_{k=1}^n\sigma_{k,1}}
\right\}\right)\\
&\leq2\exp\left(-\min\left\{Ctd^{1-4\e},Ct^2d^{2-8\e}n^{-1}
\right\}\right).
\end{split}\end{equation}
Thus, for $n\leq d\log d$, we get
\begin{equation}\label{eq:quad_mart_bound3}
\sup_{1\leq k\leq n}\left|\sum_{j=1}^k(A_j-\EE[A_j])\EE[B_j]+(B_j-\EE[B_j])\EE[A_j]\right|\leq d^{-\frac12+5\e}
\end{equation}
with overwhelming probability.  Finally, combining the bounds from \eqref{eq:quad_mart_bound1}, \eqref{eq:quad_mart_bound2}, \eqref{eq:quad_mart_bound3}, we conclude that, for $n\leq d \log d$,
\begin{equation}
\sup_{1\leq k\leq n}|\cM_k^{\hess,\tau}|\leq d^{-\frac12+8\e}
\end{equation}
with overwhelming probability.   This completes the proof of part (\ref{item:Mquad_bound}) of the lemma.  

For part (\ref{item:Equad_bound}), we observe that $\Delta\cE_k^{\hess,\tau}=\EE[A_kB_k]-\EE[A_k]\EE[B_k] + O(d^{-2+4\epsilon})$,
the error terms arising from $\bu_k$ cross terms, so that the bound of $\cE_k^{\hess,\tau}$ follows immediately from \eqref{eq:quad_mart_bound2}.
\end{proof}

\subsection{Martingale for HSGD}
Recall that the HSGD process $\{\bV_t\}$ satisfies the differential equation
\begin{equation}
d\bV_t=-\g\nabla\cR(\bV_t+\btx)dt+\g\sqrt{\tfrac2d\cP(\bV_t+\btx)\bK}dB_t
\end{equation}
where
\begin{equation}
\cP(\bx)=(\bx-\btx)^T\bK^2(\bx-\btx)+\eta^2.
\end{equation}
Using It\^o's Lemma, this gives us
\begin{equation}
q(\bV_t^\tau)=q(\bV_0^\tau)
-\g\int_0^t(\nabla q(\bV_{s}^\tau))^T\nabla\cR(\bV_{s}^\tau+\btx)ds
+\tfrac{\g^2}{2}\int_0^t\cP(\bV_{s}^\tau+\btx)\tr(\tfrac1d\bK (\nabla^2q))ds
+\cM_t^{HSGD,\tau},
\end{equation}
where
\begin{equation}
\cM_t^{HSGD,\tau}=\g\int_0^t(\nabla q(\bV_{s}^\tau))^T\sqrt{\tfrac1d\cP(\bV_s^\tau+\btx)\bK}dB_s.
\end{equation}
\begin{lemma}\label{lem:mart_bound_HSGD}
  For any quadratic $q$ with $\|q\|_{C^2}\leq 1$, after imposing the stopping time $\tau$, the resulting martingale $\cM^{HSGD,\tau}_t$ satisfies
\begin{equation}
  \sup_{0 \leq t \leq n/d} \left[\cM^{HSGD,\tau}_t\right]\leq Cd^{-\frac12+3\e},
\end{equation}
provided $n \leq d\log d$.
\end{lemma}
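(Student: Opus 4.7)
The plan is to control $\cM^{HSGD,\tau}_t$ via a direct estimate of its quadratic variation together with a Gaussian martingale concentration inequality. Since $\cM^{HSGD,\tau}_t$ is a continuous It\^o integral against Brownian motion, its quadratic variation is
\[
\langle \cM^{HSGD,\tau}\rangle_t = \tfrac{\gamma^2}{d}\int_0^{t\wedge (\tau/d)} \cP(\bV_s+\btx)\,\bigl(\nabla q(\bV_s)\bigr)^T\bK\,\nabla q(\bV_s)\,ds.
\]
The first step is therefore to bound each of the two factors in the integrand uniformly on $\{s\leq \tau/d\}$, where by definition $\|\bV_s\|\leq d^{\e}$.

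On this event, inequality \eqref{eq:q} combined with $\|q\|_{C^2}\leq 1$ gives $\|\nabla q(\bV_s)\|\leq 2d^{\e}$, and hence $(\nabla q(\bV_s))^T\bK\,\nabla q(\bV_s)\leq 4\|\bK\|d^{2\e}$. For the risk factor, using $\|\btx\|\leq d^{\e}$ from Assumption \ref{a:lineargroundtruth}, one sees $\cP(\bV_s+\btx)\leq C(\|\bK\|)d^{2\e}+\eta^2\leq C(\|\bK\|)d^{2\e}$. Combining these two estimates and using $n\leq d\log d$, I would obtain the almost-sure bound
\[
\sup_{0\leq t\leq n/d}\langle \cM^{HSGD,\tau}\rangle_t \leq C(\|\bK\|)\gamma^2 d^{-1+4\e}\cdot\tfrac{n}{d}\leq C(\|\bK\|)d^{-2+4\e}\log d \eqqcolon V.
\]

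Given the deterministic bound $V$ on $\langle \cM^{HSGD,\tau}\rangle_{n/d}$, the final step is to apply the Dambis--Dubins--Schwarz time change: write $\cM^{HSGD,\tau}_t=\tilde B_{\langle \cM^{HSGD,\tau}\rangle_t}$ for a Brownian motion $\tilde B$, so that
\[
\PP\Bigl(\sup_{0\leq t\leq n/d}|\cM^{HSGD,\tau}_t|\geq r\Bigr)\leq \PP\Bigl(\sup_{0\leq u\leq V}|\tilde B_u|\geq r\Bigr)\leq 2\exp\!\Bigl(-\tfrac{r^2}{2V}\Bigr)
\]
by the reflection principle. Taking $r^{2}=2V\log d\,\log\log d$ yields $r\leq C(\|\bK\|)d^{-1+2\e}(\log d)^{3/2}$, which is bounded by $d^{-\frac12+3\e}$ for $d$ large and constitutes an overwhelming-probability bound. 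The only potential obstacle is ensuring the a.s.\ bound on $\langle\cM^{HSGD,\tau}\rangle$ really is enforced by the stopping time; this is automatic because the integrand vanishes beyond $s=\tau/d$ by construction of the stopped process. No new martingale machinery or random-matrix input is needed beyond what was used for the SGD-side martingales in Lemma \ref{lem:mart_bounds}.
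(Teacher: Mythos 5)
Your proposal follows essentially the same route as the paper: bound the quadratic variation deterministically using the stopping time (which gives $\|\nabla q(\bV_s^\tau)\|\leq Cd^{\e}$ and $\cP(\bV_s^\tau+\btx)\leq Cd^{2\e}$), then apply the Gaussian tail bound for continuous martingales of bounded quadratic variation, which you correctly justify via Dambis--Dubins--Schwarz and the reflection principle. The only issue is an arithmetic slip: integrating an integrand of size $C(\|\bK\|)d^{-1+4\e}$ over $[0,n/d]$ with $n/d\leq\log d$ gives $V\leq C(\|\bK\|)d^{-1+4\e}\log d$ (as in the paper), not $d^{-2+4\e}\log d$; with the corrected $V$ your choice $r^2=2V\log d\log\log d$ still yields $r\leq d^{-\frac12+3\e}$, so the conclusion is unaffected.
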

\begin{proof}
This martingale has quadratic variation
\begin{equation}
\left[\cM^{HSGD,\tau}_t\right]=\frac{\gamma^2}{d}\int_0^t\cP(\bV_s^\tau+\btx)(\nabla q(\bV_s^\tau))^T\nabla^2\cP(\bV_s^\tau+\btx)(\nabla q(\bV_s^\tau))ds
\end{equation}
and, for all $s$, we have the bound
\begin{equation}\begin{split}
&|\cP(\bV_s^\tau+\btx)|\leq||\bV_s^\tau||^2||\bK^2||+\eta^2\leq C_1d^{2\e},\\
\end{split}\end{equation}
Thus, we have
\begin{equation}
  \sup_{0 \leq t \leq n/d} \left[\cM^{HSGD,\tau}_t\right]\leq C(\log d)d^{-1+4\e}
\end{equation}
almost surely.  By the Gaussian tail bound for continuous martingales of bounded quadratic varation,
$\PP(\sup_t[\cM^{HSGD,\tau}_t]>a)<\exp(-a^2/(2C(\log d)d^{-1+4\e}))$.  Taking $t=d^{-\frac12 +3\e}$, we conclude that 
\begin{equation}
  \sup_{0 \leq t \leq n/d} \left[\cM^{HSGD,\tau}_t\right]\leq Cd^{-\frac12+3\e}
\end{equation}
with overwhelming probability.
\end{proof}

\printbibliography
\end{document}